\documentclass[11pt, a4paper]{article}

\usepackage[T1]{fontenc}
\usepackage{amsmath}
\usepackage{amsthm}
\usepackage{amsfonts}
\usepackage{bbm}

\usepackage{cite}

\usepackage{hyperref}

\theoremstyle{plain}
\newtheorem{thm}{Theorem}

\newtheorem{cor}[thm]{Corollary}
\newtheorem{prop}[thm]{Proposition}
\theoremstyle{definition}
\newtheorem{rem}[thm]{Remark}

\def\P{\mathbb{P}}
\def\E{\mathbb{E}}
\def\R{\mathbb{R}}
\def\eqd{\overset{d}{=}}

\newcommand{\PP}[1]{\P\left({#1}\right)}
\newcommand{\EE}[1]{\E\left[{#1}\right]}

\newcommand{\di}{\,\textrm{d}}

\DeclareMathOperator{\spn}{span}

\def\Z{\mathbb{Z}}

\let\BFseries\bfseries\def\bfseries{\BFseries\mathversion{bold}} %formulas in headings bold

\title{Persistence probabilities of two-sided (integrated) sums of correlated 
stationary Gaussian sequences}

\author{Frank Aurzada \and Micha Buck}

\begin{document} 
\maketitle

\begin{abstract}
We study the persistence probability for some two-sided discrete-time Gaussian sequences that are discrete-time analogs of fractional Brownian motion and integrated fractional Brownian motion, respectively.
Our results extend the corresponding ones in continuous-time in \cite{Molchan1999a} and \cite{Molchan2017a} to a wide class of discrete-time processes.
\end{abstract}

\section{Introduction}

Persistence concerns the probability that a stochastic process has a long negative excursion. In this paper, we are concerned mainly with two-sided discrete-time processes: If $Z = (Z_n)_{n \in \Z}$ is a stochastic process, we study the rate of decay of the probability
\[
\PP{Z_n \leq 0 \ :\ \vert n \vert  \leq N}, \quad \text{as} \quad N \to \infty.
\]
In many cases of interest, the above probability decreases polynomially, i.e., as $N^{-\theta+o(1)}$, and it is the first goal to find the persistence exponent $\theta$. 
For a recent overview on this subject, 
we refer to the surveys \cite{Majumdar1999}, \cite{Bray2013a}, \cite{Aurzada2015a}.

The purpose of this paper is to analyse the persistence probability for the discrete-time analogs of two-sided fractional Brownian motion (FBM) and two-sided integrated fractional Brownian motion (IFBM). 
Our study extends results in \cite{Molchan1999a} and \cite{Molchan2017a}, respectively, to a wide class of discrete-time processes.

The study of persistence probabilities of FBM, IFBM and related processes has received considerable attention in theoretical physics and mathematics, recently. For instance, see \cite{Molchan2004} and \cite{Molchan2017a} where a relation between the Hausdorff dimension of Lagrangian regular points for the inviscid Burgers equation with FBM initial velocity and the persistence probabilities of IFBM is established; the interest for it arises from \cite{She1992} and \cite{Sinai1992}. 
Further, in \cite{Oshanin2013} a physical model involving FBM is studied as an extension to the Sinai model; see also \cite{Aurzada2013b}. Here, persistence probabilities are related to scaling properties of a quantity, called steady-state current.  Moreover, persistence of non-Markovian processes that are similar to FBM are studied in \cite{Castell2013} and \cite{Aurzada2016aUnp}, confirming results in \cite{Redner1997} and \cite{Majumdar2003}.
 
Let us recall that a FBM $(W_H(t))_{t \in \R}$ is a centered Gaussian process with covariance
\[
\EE{W_H(t) W_H(s)} = \frac{1}{2} \left( \vert t \vert^{2H} + \vert s \vert^{2H} - \vert t-s \vert^{2H} \right), \quad t,s\in\R,
\]
where $0 < H < 1$ is a constant parameter, called Hurst parameter. For
$H = 1/2$ this is a usual two-sided Brownian motion. For any $0 < H < 1$, the process has stationary increments, but no independent increments (unless $H = 1/2$). Furthermore, it is an $H$-self-similar process.
An IFBM $(I_H(t))_{t \in \R}$ is defined by $I_H(t) := \int_0^t W_H(s) \di s$ 
and is an $(H+1)$-self-similar process.

In order to define the discrete-time analogs, let $(\xi_n)$ be a real valued stationary centered Gaussian sequence such that
\begin{equation}
\label{eq:varS}
\sum_{j=1}^n \sum_{k=1}^n \E \xi_j \xi_k \sim n^{2H} \ell(n), \quad n \to \infty,
\end{equation}
with $0<H<1$ and $\ell$ slowly varying at infinity. 
Here and below, we write $f(x) \sim g(x)$ $(x \to x_0)$ if $\lim f(x)/g(x)=1$ as $x \to x_0$.
Then,
(\ref{eq:varS}) implies the weak convergence result
\begin{equation}
\label{scalingLimit}
\left( \frac{1}{n^H \ell(n)^{1/2}} \sum_{k=1}^{\lfloor nt \rfloor} \xi_k \right)_{t \geq 0} \Rightarrow (W_H(t))_{t \geq 0}
\end{equation}
with fractional Brownian motion $(W_H(t))$, see e.g.\ Theorem 4.6.1 in \cite{Whitt2002}. 
For this reason, it is natural to consider the stationary increments sequence $(S_n)_{n\in\Z}$ given by \[S_n-S_{n-1}:=\xi_n \text{ for }  n \in \Z \quad \text{and} \quad S_0:=0\] as a discrete-time analog of FBM.

Now, we will define the discrete-time analog of IFBM such that symmetry properties like in the  continuous-time setting are satisfied.
With this in mind, a natural process is given by \[{I_n-I_{n-1}:=(S_n+S_{n-1})/2} \text{ for } n \in \Z \quad \text{and} \quad I_0:=0.\] In Section 2, we discuss relations to the process with increments $(S_n)$ (instead of $((S_n+S_{n-1})/2)$), which may also seem natural but for which our method of proof does not apply directly due to a lack of symmetry. 

In \cite{Molchan1999a} it is shown that one has $\PP{W_H(t) \leq 1 \ :\ \vert t \vert \leq T} = T^{-1 + o(1)}$. 
Our first result, treats the discrete-time analog. The technique we use to prove the theorem is completely different from the one in \cite{Molchan1999a}.

\begin{thm}
\label{thm:S_n}
Let $(\xi_n)$ be a real valued stationary centered Gaussian sequence such that (\ref{eq:varS}) holds. Then, there is a constant $c>0$ such that, for every $N\geq 1$,
\begin{equation*}
c^{-1} N^{-1} \leq \PP{S_n \leq 0 \ :\ \vert n \vert \leq N} \leq N^{-1}.
\end{equation*}
\end{thm}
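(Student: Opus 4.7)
The plan is to exploit the almost sure uniqueness of the maximum location of the Gaussian vector $(S_n)_{|n|\leq N}$. Let $K \in \{-N,\ldots,N\}$ be the (a.s.\ unique) index at which $(S_n)_{|n|\leq N}$ attains its maximum, so that
$$
\sum_{k=-N}^{N}\P(K=k)=1.
$$
Up to a null set $\{K=k\}=\{S_n-S_k\leq 0:|n|\leq N\}$, and by stationarity of the increments of $S$ the shifted sequence $(S_{n+k}-S_k)_{n\in\Z}$ has the same law as $(S_n)_{n\in\Z}$; therefore
$$
\P(K=k)=\P(S_m\leq 0 : -N-k\leq m\leq N-k),
$$
and in particular $\P(K=0)=p_N$ with $p_N:=\P(S_n\leq 0 : |n|\leq N)$. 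Moreover, the time-reversal identity $(S_{-n})_{n\in\Z}\eqd(-S_n)_{n\in\Z}$, which holds since $S$ is built from a stationary Gaussian sequence, gives $\P(K=k)=\P(K=-k)$.

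For the upper bound, I would aim to prove the monotonicity
$$
\P(K=0)\leq \P(K=k), \qquad k=1,\ldots,N,
$$
so that $N\cdot p_N\leq\sum_{k=1}^{N}\P(K=k)\leq 1$, yielding $p_N\leq N^{-1}$. Rephrased in terms of persistence probabilities on shifted windows of length $2N+1$, the claim is that sliding the observation window away from the origin (while still containing it) can only make the persistence event more likely. I would establish it by splitting both events along the common piece $\{S_m\leq 0 : m\in[-N,N-k]\}$ and comparing the two remaining tail events, $\{S_m\leq 0 : m\in[N-k+1,N]\}$ and $\{S_m\leq 0 : m\in[-N-k,-N-1]\}$, using the time-reversal symmetry combined with a Gaussian correlation / FKG-type inequality.

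For the lower bound, the natural route is via the scaling limit (\ref{scalingLimit}): $K/N$ converges in distribution to the maximum location of $(W_H(t))_{|t|\leq 1}$, which has a continuous density that is strictly positive at zero, so $\P(|K|\leq\varepsilon N)\geq c_\varepsilon>0$ for some fixed small $\varepsilon>0$ and all large $N$. Combined with the reverse shift-comparison $\P(K=k)\leq C\,\P(K=0)$ for $|k|\leq\varepsilon N$, this gives $c_\varepsilon\leq C(2\varepsilon N+1)p_N$, hence $p_N\geq c/N$.

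The main obstacle is the shift-comparison of persistence probabilities $\P(K=k)$ needed for both bounds. It cannot be obtained from the simplest correlation inequalities, since the covariances of the two-sided sequence $S$ can take either sign depending on whether $H>1/2$ or $H<1/2$; hence neither FKG nor basic Slepian-type comparisons apply directly. A careful combination of the time-reversal symmetry with the Gaussian correlation inequality (Royen's theorem) seems the most promising route to prove the decisive comparison $\P(K=0)\leq\P(K=k)$ for the upper bound, and its analog for the lower bound.
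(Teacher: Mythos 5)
Your reduction to the law of the argmax $K$ is the right starting point, but the proof has a genuine gap exactly where you flag it: the shift-comparisons $\P(K=0)\le\P(K=k)$ (for the upper bound) and $\P(K=k)\le C\,\P(K=0)$ (for the lower bound) are never established, and the tools you propose will not deliver them. Royen's Gaussian correlation inequality applies to \emph{symmetric} convex sets, whereas the persistence events $\{S_m\le 0,\ m\in J\}$ are orthant-type cones that are not symmetric about the origin; FKG/Pitt requires nonnegative covariances, which, as you yourself note, fail here in general. Your splitting along the common window $[-N,N-k]$ does not close the argument either, because time reversal maps that common block to $[k-N,N]$ rather than to itself, so there is no symmetry exchanging the two ``tail'' constraints. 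As it stands, both bounds rest on an unproven (and, to my knowledge, nontrivial) monotonicity of same-length shifted-window persistence probabilities.

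The paper avoids this comparison entirely by not insisting that all windows have length $2N+1$. For the upper bound it compares $\PP{S_n\le 0\,:\,|n|\le N}$ with the events over the \emph{sub}-windows $[-k,N-k]$, $k=1,\dots,N$: each such event has fewer constraints, so the inequality $\PP{S_n\le 0:|n|\le N}\le\PP{S_n\le 0: -k\le n\le N-k}$ is trivial, and by stationarity of increments the right-hand side equals $\PP{T_N=k}$ where $T_N$ is the argmax on $\{0,\dots,N\}$; summing over $k$ the disjoint events give $N\,p_N\le 1$. For the lower bound it compares with the \emph{super}-windows $[-N-k,2N-k]\supseteq[-N,N]$, $k=0,\dots,N$, each of which trivially gives a lower bound term equal to $\PP{T_{3N}=N+k}$, so $(N+1)\,p_N\ge\PP{T_{3N}\in[N,2N]}$, and the latter is bounded below by a constant via the invariance principle \eqref{scalingLimit} applied to a continuous functional of the path (this replaces your density-of-the-argmax step and needs no reverse comparison). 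So the correct fix is not a correlation inequality but a different choice of windows, under which the needed monotonicity is just inclusion of constraint sets.
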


In order to prove the corresponding result for the process $(I_n)$, we will use a change of measure argument. 
This argument requires an additional assumption as follows:
Let $\mu$ denote the spectral measure of the sequence $(\xi_n)$, i.e.,
\[
\E \xi_j \xi_k =: \int_{(-\pi,\pi]} e^{i \vert j-k \vert u} \di \mu(u).
\]
The spectral measure $\mu$ has a (possibly vanishing) component that is absolutely
continuous with respect to the Lebesgue measure. Let us denote by $p$ its
density, i.e., $\di\mu(u) =: p(u)\di u + \di\mu_s(u)$.
We will assume that $p$ satisfies
\begin{equation}
\label{eq:specDens}
p(u) \sim \ell(1/u) \vert u \vert^{1-2H}, \quad u \to 0,
\end{equation}
where $\ell$ is a slowly varying function at infinity. It is well-known that (\ref{eq:specDens}) implies (\ref{eq:varS}) and thus (\ref{scalingLimit}).

The nature of this assumption can be understood by considering the fractional Gaussian noise process, defined by $\xi_n^{\textsc{fgn}} := W_H(n) - W_H(n - 1)$. This stationary centered Gaussian sequence has an absolutely continuous spectral measure with density function $p_{\textsc{fgn}}$ that satisfies (see e.g. \cite{Samorodnitsky_2006})
\[p_{\textsc{fgn}}(u) \sim m_H \vert u \vert^{1-2H},\quad u \to 0,\]
where $m_H = \Gamma(2H + 1) \sin(\pi H)/2 \pi$.
So, we assume that the density of the absolutely continuous part of the spectral measure of the stationary process $(\xi_n)$ is comparable to the spectral density of fractional Gaussian noise, up to the slowly varying function $\ell$. 

We are now ready to state our second main result.

\begin{thm}
\label{thm:I_n}
Let $(\xi_n)$ be a real valued stationary centered Gaussian sequence such that (\ref{eq:specDens}) holds. Then,
\begin{equation*}
\PP{I_n \leq 0 \ :\ \vert n \vert \leq N} = N^{-(1-H)+o(1)}.
\end{equation*}
\end{thm}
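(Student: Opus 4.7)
Following the authors' hint, I attack the lower bound $p_N \geq N^{-(1-H)+o(1)}$ (where $p_N := \PP{I_n \leq 0 : |n|\leq N}$) via a Cameron--Martin shift. Let $\Sigma_N$ be the covariance matrix of $(I_n)_{|n|\leq N}$ and let $g = (g_n)_{|n|\leq N}$ be a deterministic shift proportional to $-(|n|+1)^{H+1}$, with the constant tuned so that the shifted Gaussian measure puts positive (uniform in $N$) probability on the persistence event. Writing $p_N$ as an expectation under the shifted measure and truncating the linear form $L_g(I) := g^\top \Sigma_N^{-1} I$ at a level $M = O(\|g\|)$ gives
\[
p_N \;\geq\; \exp\!\left(-\tfrac{1}{2}g^\top \Sigma_N^{-1} g - O(\|g\|)\right)\,
\tilde{\P}_g\!\left(I_n \leq 0\ \forall |n|\leq N,\ L_g(I) \leq \|g\|^2 + M\|g\|\right),
\]
where under $\tilde{\P}_g$ the process $(I_n)$ has mean $g$ and covariance $\Sigma_N$. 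Verifying that the right-hand probability is bounded below uniformly in $N$ will use the scaling limit (\ref{scalingLimit}), applied to the rescaled $(I_{\lfloor Nt \rfloor}/N^{H+1})_{|t|\leq 1}$, together with a small-ball estimate for the limiting IFBM. The lower bound then reduces to showing $g^\top \Sigma_N^{-1} g \leq 2(1-H)\log N + o(\log N)$.

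\textbf{Evaluating the Cameron--Martin norm.} I would relate the RKHS norm of $g$ with respect to $(I_n)$ to that of the stationary sequence $(\xi_n)$ by exploiting $I_n - I_{n-1} = (S_n + S_{n-1})/2$ and $S_n - S_{n-1} = \xi_n$: summation by parts rewrites $g^\top \Sigma_N^{-1} g$ as a quadratic form in a discrete second difference of $g$ paired against the inverse of the Toeplitz covariance of $(\xi_n)_{|n|\leq N}$. Passing to the Fourier domain and inserting (\ref{eq:specDens}) makes the relevant integral dominated by frequencies $|u|\sim 1/N$; the factor $|u|^{1-2H}$ produces precisely the coefficient $1-H$ in front of $\log N$, while $\ell$ and other lower-order terms contribute only to the $o(\log N)$ error.

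\textbf{Upper bound.} For the complementary bound $p_N \leq N^{-(1-H)+o(1)}$ I would try to reduce to Theorem~\ref{thm:S_n}. Via Abel summation, $I_n$ is a weighted sum of the partial sums $S_k$, so the event $\{I_n \leq 0 : |n|\leq N\}$ entails negativity-type constraints on $(S_k)$ at geometrically spaced scales $2^k$, $k \leq \log_2 N$. Applying the $N^{-1}$ estimate from Theorem~\ref{thm:S_n} on each scale and combining via an FKG- or Slepian-type comparison for the associated Gaussian vector should yield the exponent $1-H$. A cleaner alternative is to transfer the continuous-time upper bound of \cite{Molchan2017a} through (\ref{scalingLimit}), provided persistence probabilities are shown to be robust under discretization.

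\textbf{Main obstacle.} The delicate step is the Cameron--Martin norm estimate. Because $(I_n)$ is non-stationary, $\Sigma_N$ is not Toeplitz, so one must reduce to a Toeplitz quadratic form in $(\xi_n)$ and uniformly control all boundary terms so that they stay within the $o(\log N)$ error. Extracting the sharp constant $1-H$ from the power-law singularity of $p(u)$ at zero---precisely what (\ref{eq:specDens}) provides---is the crucial quantitative input and explains why Theorem~\ref{thm:I_n} requires the stronger spectral assumption absent from Theorem~\ref{thm:S_n}.
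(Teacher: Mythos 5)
Your proposal has two fatal gaps, one in each bound. For the lower bound, the Cameron--Martin strategy as you set it up cannot work: the shifted probability $\tilde{\P}_g(I_n\leq 0\ \forall\,|n|\leq N)$ with $g_n\asymp -(|n|+1)^{H+1}$ is \emph{not} bounded below uniformly in $N$. By the law of the iterated logarithm, $\sup_{n}I_n/|n|^{H+1}=+\infty$ a.s., so $\P(I_n\leq c|n|^{H+1}\ \forall\,|n|\leq N)\to 0$ for every fixed $c$; after a Lamperti-type time change this event is a level-crossing event for a stationary Gaussian process over a time horizon $\log N$, so it in fact decays \emph{polynomially} in $N$, which contaminates the exponent. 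A scaling-limit/small-ball argument on $[-1,1]$ cannot rescue this, because the event is genuinely multi-scale. Moreover, your claimed norm estimate $g^\top\Sigma_N^{-1}g\leq 2(1-H)\log N+o(\log N)$ already fails numerically: for $H=1/2$ the RKHS norm of $t\mapsto t^{3/2}$ for integrated Brownian motion is $\int (f'')^2=\tfrac{9}{16}\log N$ per side, i.e.\ $\tfrac{9}{8}\log N$ two-sided, exceeding $2(1-H)\log N=\log N$ even at unit amplitude -- and a unit amplitude is nowhere near enough to dominate the process. The persistence exponent $1-H$ is simply not a Cameron--Martin/large-deviation rate, which is why Molchan's argument is needed. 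For the upper bound, Slepian and FKG inequalities bound probabilities of intersections of decreasing events from \emph{below} by products, so they cannot give you an upper bound of product type at dyadic scales; also $\{I_n\leq 0\ \forall n\}$ does not imply sign constraints on individual $S_k$, and weak convergence \eqref{scalingLimit} does not transfer polynomial decay rates of persistence probabilities from the continuous-time limit.

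The mechanism the paper actually uses, and which your proposal is missing entirely, is Molchan's concave-majorant identity adapted to discrete time. One introduces $F_N=\sum_{k=1}^{N-1}(\gamma_{k,k}^- -\gamma_{k,N-k}^+)_+$, where $\gamma^\pm$ are one-sided extremal slopes of $(I_n)$ at $k$; telescoping over the nodal points of the concave majorant gives $F_N=\gamma_0^+-\gamma_N^-$, whence $\E F_N=2\,\E\max_{1\leq n\leq N}I_n/n\asymp \ell(N)^{1/2}N^{H}$. On the other hand, by the quasi-stationarity \eqref{eq:tt} each summand dominates (in expectation) $\vartheta_N=(\gamma_{0,N}^--\gamma_{0,N}^+)_+$, and $\{\vartheta_N\geq 2\}\supseteq\{I_n+|n|\leq 0,\ |n|\leq N\}$, so $N\,\tilde p_N\lesssim \E F_N$ gives the upper bound $\tilde p_N\lesssim N^{-(1-H)}\ell(N)^{1/2}$; a matching lower bound for $\P(\vartheta_N>0)$ follows by reversing the inequality on most indices, and is converted into a bound on $\tilde p_N$ via a union over translates $A_m^{(N)}$ of a convex set together with Pr\'ekopa's log-concavity inequality. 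Only at the very last step does a change of measure enter, and only to pass between $\tilde p_N=\P(I_n+|n|\leq 0,\,|n|\leq N)$ and $p_N$, using a shift $f\in\mathcal H_H(I)$ with $f(n)\geq|n|$ (growth $n^{\rho+2}$ with $\rho+2<H+1$), whose norm is \emph{bounded}; this is where \eqref{eq:specDens} is used, not to extract the constant $1-H$ from a Toeplitz quadratic form. I would encourage you to identify the correct intermediate object ($\E F_N$ and the drift-added probability $\tilde p_N$) before attempting quantitative estimates.
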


We recall that \cite{Molchan2017a} considers the continuous-time case. Many arguments from that paper can be adapted to our setup. However, for instance, arguments using self-similarity need to be replaced by new ideas. Furthermore, new results concerning the change of measure are needed and may be of independent interest.

For example,
as a byproduct of the change of measure techniques,
we can improve Theorem 11 in \cite{Aurzada2016aUnp}, where the persistence problem of the one-sided discrete-time analog of FBM is considered. 
There it is shown that for every real valued stationary centered Gaussian sequence $(\xi_n)_{n \in \mathbb{N}}$  such that \eqref{eq:varS} holds and every $a > 0$, there is some constant $c>0$ such that
\begin{align}
\begin{aligned}
\label{eq:thm11aurzada}
c^{-1}N^{-(1-H)} &\frac{\sqrt{\ell(N)}}{\sqrt{\log(N)}} \leq \PP{S_n < 0 \ :\ 1 \leq n \leq N} \quad \text{and}\\
&\qquad  \PP{S_n < -a \ :\ 1 \leq n \leq N} \leq c N^{-(1-H)} \sqrt{\ell(N)}.
\end{aligned}
\end{align}
Thus, one has a lower bound for the probability $\PP{S_n < b \ :\ 1 \leq n \leq N}$, if $b$ is non-negative, and an upper bound, if $b$ is negative. 
In order to get both, a lower estimate and an upper estimate, for some arbitrary $b \in \R$, \cite{Aurzada2016aUnp} uses a change of measure argument. To get this argument to work, a strong assumption on the covariance function of $(S_n)$ is made; namely $\inf_{n \geq 1} \E {S_1 S_n} > 0$ (see also our Remark \ref{rem:fgn} below).
We are able to prove upper and lower bounds whenever \eqref{eq:specDens} is satisfied. We state this result as Corollary \ref{cor:thm11aurzada} below.

The outline of this paper is as follows. In Section 2, we collect some basic properties of the processes $(S_n)$ and $(I_n)$. Moreover, we present some results concerning the reproducing kernel Hilbert spaces of the considered processes that may be of independent interest.
In Section 3, we give a proof of Theorem \ref{thm:S_n}.
Finally, in Section 4, we prove our main result, Theorem \ref{thm:I_n}.

\section{Preliminaries}

Let $(W_H(t))$ be a FBM with Hurst parameter $0<H<1$ and $(I_H(t))$ an IFBM. 
Then, unlike $(W_H(t))$, the process $(I_H(t))$ does not have stationary increments. 
Instead, the process satisfies for all $t_0 \in \R$ \[ \left( I_H(t+t_0) - I_H(t_0)- t W_H(t_0) \right)_{t\in\R} \eqd \left(I_H(t)\right)_{t\in\R} .\] 
In the discrete-time setup, we have analogous properties. 
From the definition of the process $(S_n)$, we straightforwardly obtain stationary increments
\begin{equation*}
\left( S_{n_0+n}-S_{n_0} \right)_{n \in \Z} \eqd \left( S_n \right)_{n \in \Z} \quad \text{for all} \quad n_0 \in \mathbb{Z}.
\end{equation*} 
Also, it is easy to verify that we have 
\begin{equation}
\label{eq:tt}
\left( I_{n_0+n}-I_{n_0}-n\tilde{S}_{n_0} \right)_{n \in \Z} \eqd \left( I_n \right)_{n \in \Z}\quad \text{for all} \quad n_0 \in \mathbb{Z},
\end{equation}
where $(\tilde{S}_n)_{n \in \Z}$ denotes the sequence given by $\tilde{S}_n := \frac{S_n + S_{n-1}}{2}$.

Let us now recall the definition of the reproducing kernel Hilbert space (RKHS) of a centered Gaussian process $(X_t)_{t \in \mathbb{T}}$. For this purpose, let $\mathbb{H}$ denote the $L^2$-closure of the set $\spn\{X_t : t \in \mathbb{T}\}$. Then the RKHS $\mathcal{H}$ of $(X_t)$ is the Hilbert space of functions  \[\mathbb{T} \ni t \mapsto \EE{X_t h}, \quad h \in \mathbb{H},\] with inner product $\left< \EE{Xh_1} , \EE{Xh_2} \right>_{\mathcal{H}} = \EE{h_1 h_2} $. 

The following result, Proposition 1.6 in \cite{Aurzada2013}, will be an important tool
throughout this work.
\begin{prop}
\label{prop:changeOfMeasure}
Let $X$ be some centered Gaussian process 
with RKHS $\mathcal{H}$. Denote by $\|\cdot\|$ the norm in $\mathcal{H}$. Then, for each $f \in \mathcal{H}$ and each measurable $S$ such that
$\PP{X \in S} \in (0, 1)$, we have
\begin{equation}
\label{eq:changeOfMeasure1}
e^{-\sqrt{2 \|f\|^2 \log(1/\PP{X \in S})} - \|f\|^2/2} \PP{X \in S} \leq \PP{X+ f \in S}.
\end{equation}
If $\|f\|^2 < 2 \log(1/\PP{X \in S})$, we have in addition
\begin{equation}
\label{eq:changeOfMeasure2}
\PP{X+ f \in S} \leq e^{\sqrt{2 \|f\|^2 \log(1/\PP{X \in S})} - \|f\|^2/2} \PP{X \in S}.
\end{equation}
\end{prop}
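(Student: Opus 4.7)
The plan is to derive both inequalities from a single source, the Cameron--Martin shift formula. First, I would pick the Gaussian element $\xi \in \mathbb{H}$ corresponding to $f \in \mathcal{H}$, i.e., $\EE{X_t \xi} = f(t)$ for every $t$; such a $\xi$ is a centered Gaussian with $\EE{\xi^2} = \|f\|^2$ and is jointly Gaussian with $X$. A direct computation with the joint characteristic function then yields the Cameron--Martin identity
\[
\PP{X + f \in S} = \EE{e^{\xi - \|f\|^2/2}\,\mathbbm{1}_{X \in S}}.
\]
Both bounds will follow from this identity; only H\"older/Jensen and one Gaussian tail estimate are needed on top.

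For the upper bound (\ref{eq:changeOfMeasure2}), I would apply H\"older's inequality with conjugate exponents $p, q > 1$. Since $\EE{e^{p(\xi - \|f\|^2/2)}}^{1/p} = e^{(p-1)\|f\|^2/2}$, this yields
\[
\PP{X + f \in S} \leq e^{(p-1)\|f\|^2/2}\,\PP{X \in S}^{1 - 1/p}.
\]
Taking logarithms and minimizing over $p > 1$ gives the optimizer $p^{\ast} = \sqrt{2 \log(1/\PP{X \in S})}/\|f\|$; the constraint $p^{\ast} > 1$ coincides exactly with the hypothesis $\|f\|^2 < 2 \log(1/\PP{X \in S})$, and substituting $p^{\ast}$ back produces (\ref{eq:changeOfMeasure2}).

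For the lower bound (\ref{eq:changeOfMeasure1}), I would factor $\PP{X \in S}$ out of the Cameron--Martin identity and apply Jensen's inequality to the convex function $x \mapsto e^x$ under the conditional law given $\{X \in S\}$:
\[
\PP{X + f \in S} \geq \PP{X \in S}\,\exp\bigl(\EE{\xi \mid X \in S} - \|f\|^2/2\bigr).
\]
It then remains to show $\EE{\xi \mid X \in S} \geq -\sqrt{2 \|f\|^2 \log(1/\PP{X \in S})}$. A Hardy--Littlewood-type rearrangement argument reduces this to estimating $\EE{\xi\,\mathbbm{1}_A}$ on the extremal event $A^{\ast} = \{\xi \leq q_a\}$ with $q_a = \|f\|\,\Phi^{-1}(a)$, on which the problem becomes a concrete Gaussian integral.

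The main obstacle I expect is pinning down this last estimate with the exact constant $\sqrt{2 \|f\|^2 \log(1/\PP{X \in S})}$. A direct split of $\EE{-\xi\,\mathbbm{1}_A}$ into the contributions from $\{\xi \geq -t\}$ and $\{\xi < -t\}$ at the natural threshold $t = \|f\|\sqrt{2\log(1/a)}$ recovers the correct leading order via Chernoff's inequality but leaves an unwanted additive remainder. A sharper analysis using the full Mills ratio, or an optimization over the threshold combined with the exact tail formula $\EE{\xi\,\mathbbm{1}_{\xi \leq -t}} = -\|f\|\phi(t/\|f\|)$ (with $\phi$ the standard normal density), is needed to produce precisely the stated constant; everything else is routine bookkeeping.
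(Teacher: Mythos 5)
The paper does not prove this proposition itself; it is quoted verbatim from Proposition~1.6 of \cite{Aurzada2013}, whose proof runs through exactly the Cameron--Martin identity you start from. Your upper bound is correct and complete: H\"older applied to $\EE{e^{\xi-\|f\|^2/2}\mathbbm{1}_{X\in S}}$ with the Gaussian moment computation $\EE{e^{p\xi}}=e^{p^2\|f\|^2/2}$ and optimization over $p$ gives precisely \eqref{eq:changeOfMeasure2}, and the constraint $p^{\ast}>1$ is exactly the hypothesis $\|f\|^2<2\log(1/\PP{X\in S})$ --- this is the same mechanism behind the paper's remark that the bound fails without that condition.

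The lower bound is where you have a genuine gap. Your reduction via Jensen is fine, and the rearrangement correctly identifies the extremal event, but everything then rests on the inequality $\phi(\Phi^{-1}(a))\leq a\sqrt{2\log(1/a)}$ for all $a\in(0,1)$, which you explicitly do not prove. That inequality is true, but it is \emph{sharp} as $a\to 0$ (both sides are asymptotic to $\phi(t)$ with $a=\Phi(-t)$), so no crude tail bound will close it; one needs a quantitative Mills-ratio estimate such as $\Phi(-t)\geq 2\phi(t)/(t+\sqrt{t^2+4})$ together with a separate check on a compact range of $t$. As submitted, the constant in \eqref{eq:changeOfMeasure1} is therefore not established. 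The fix is to abandon Jensen and run your own H\"older argument in the reverse direction, which is symmetric to your upper bound and needs no side condition: writing $Z:=\xi-\|f\|^2/2$ and $A:=\{X\in S\}$,
\begin{equation*}
\PP{X\in S}=\EE{e^{-Z/p}\cdot e^{Z/p}\mathbbm{1}_{A}}\leq \EE{e^{-Zq/p}}^{1/q}\,\EE{e^{Z}\mathbbm{1}_{A}}^{1/p}
=e^{\frac{q\|f\|^2}{2p^2}+\frac{\|f\|^2}{2p}}\,\PP{X+f\in S}^{1/p},
\end{equation*}
with $q=p/(p-1)$. Rearranging gives $\PP{X+f\in S}\geq \PP{X\in S}^{\,p}\,e^{-\|f\|^2/(2(p-1))-\|f\|^2/2}$, and choosing $p=1+\|f\|/\sqrt{2\log(1/\PP{X\in S})}$ (always $>1$) yields exactly \eqref{eq:changeOfMeasure1}. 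With that replacement your proof is complete; alternatively, keep your route but supply a genuine proof of the Gaussian inequality above, since it is the entire content of the lower bound.
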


\begin{rem}
We want to mention that the proof of Proposition 1.6 in \cite{Aurzada2013} fails if $\|f\|^2 \geq 2 \log(1/\PP{X \in S})$. 
Thus, unlike in \cite{Aurzada2013}, we have excluded this case here. In the applications of this proposition that we know of, the function $f \in \mathcal{H}$ is fixed and one is interested in the asymptotic behavior of the probabilities $\PP{X \in S^{(N)}}$ for $N\to\infty$, where $(S^{(N)})$ is a sequence of measurable sets such that $\lim_{N\to\infty} \PP{X \in S^{(N)}} = 0$. 
In this case the condition is satisfied for $N$ large enough. Hence, Proposition 1.6 in \cite{Aurzada2013} can be applied in the same way as before.
\end{rem}

First, we show the existence of a function in the RKHS of $(\xi_n)_{n\in\Z}$ with certain asymptotic behavior.

\begin{prop}
\label{prop:RKHS}
Let $H \in (0,1)$, $\rho \in (-1,H-1)$ and let $\mathcal{H}_H(\xi)$ denote the RKHS of the process $(\xi_n)_{n \in \Z}$. Then, if \eqref{eq:specDens} is satisfied, there is an even function $h \in \mathcal{H}_H(\xi)$ such that $h>0$ and $h(n) \sim n^\rho$.
\end{prop}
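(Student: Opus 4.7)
The plan is to use the spectral representation of the RKHS: a function $h \colon \Z \to \R$ belongs to $\mathcal{H}_H(\xi)$ if and only if there exists $\phi \in L^2(d\mu)$ with
\[
h(n) = \int_{-\pi}^{\pi} e^{inu}\phi(u)\,d\mu(u), \quad n\in\Z,
\]
and then $\|h\|_{\mathcal{H}_H(\xi)} = \|\phi\|_{L^2(d\mu)}$. Setting $\alpha := \rho+1 \in (0,H)$, my candidate ``density'' on the spectral side is the even, non-negative function
\[
g(u) := \operatorname{Re}\bigl[(1-e^{iu})^{-\alpha}\bigr] = \bigl(2\sin(|u|/2)\bigr)^{-\alpha}\cos\!\bigl(\alpha(\pi-|u|)/2\bigr),
\]
smooth on $[-\pi,\pi]\setminus\{0\}$ with singularity $g(u) \sim \cos(\pi\alpha/2)|u|^{-\alpha}$ at $0$, whose binomial expansion $g = \sum_{k\ge 0} a_k \cos(k\cdot)$ has \emph{strictly positive} coefficients $a_k = \binom{\alpha+k-1}{k}$ satisfying $a_k\sim k^{\rho}/\Gamma(\alpha)$.

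To place $g$ inside the RKHS I localize near $0$. By \eqref{eq:specDens} one can choose $\delta\in(0,\pi)$ such that $p>0$ for $0<|u|<\delta$; fix an even $C^\infty$ cutoff $\eta\geq 0$ with $\eta\equiv 1$ near $0$ and $\operatorname{supp}\eta\subset[-\delta,\delta]$. By the Lebesgue decomposition there is a Borel set $A \subset [-\pi,\pi]$ of Lebesgue measure zero with $\mu_s(A^c)=0$. Define $\phi := g\eta/p$ on $\{p>0\}\setminus A$ and $\phi := 0$ elsewhere. Then $\int e^{inu}\phi\,d\mu = \int_{-\pi}^{\pi} e^{inu} g(u)\eta(u)\,du$ (the singular part contributes nothing since $\phi \equiv 0$ on $A$), while $\|\phi\|_{L^2(d\mu)}^2 = \int g^2\eta^2/p\,du$ reduces to a local integral whose density near $0$ behaves like $|u|^{2H-2\alpha-1}/\ell(1/u)$, hence is Lebesgue integrable precisely because $\alpha<H$, i.e.\ $\rho<H-1$. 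Thus $h(n) := \int e^{inu}\phi\,d\mu$ lies in $\mathcal{H}_H(\xi)$ and is even. Splitting $h(n) = \pi a_n - \int e^{inu} g(u)(1-\eta(u))\,du$, the correction is the Fourier coefficient of a $C^\infty$ function on the torus (as $1-\eta$ kills the only singularity of $g$, and $g$ is smooth across $\pm\pi$), so it decays faster than any polynomial in $n$; hence $h(n) \sim \pi a_n \sim C n^{\rho}$ with $C = \pi/\Gamma(\alpha) > 0$.

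The main obstacle is pointwise positivity $h(n)>0$ for \emph{every} $n$, not merely asymptotically. I would split this into three regimes. For $|n|\le\pi/(2\delta)$ one has $\cos(nu) > 0$ throughout the support of $g\eta$, and since $g\eta \not\equiv 0$ the integral $h(n) = \int\cos(nu)\,g\eta\,du$ is strictly positive. For $|n|$ beyond some threshold, the super-polynomially small cutoff correction is dominated by $\pi a_n \sim C|n|^\rho$, giving $h(n)>0$. The residual intermediate range should be handled by tuning $\delta$ and $\eta$ so the two regimes overlap; failing that, one can add a small, even, positive finite-rank correction from $\mathcal{H}_H(\xi)$ — for instance a symmetric combination of the reproducing kernels $n\mapsto \E[\xi_{k}\xi_{n}] + \E[\xi_{-k}\xi_{n}]$ for a few indices $k$ — tailored to the finitely many bad values of $n$, with RKHS norm small enough not to disturb the asymptotics. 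Arranging this last adjustment cleanly is the step I expect to be the most delicate.
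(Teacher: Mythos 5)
Your overall architecture is the same as the paper's: represent $h(n)=\int\varphi(u)e^{-inu}\di\mu(u)$ with $\varphi\in L^2(\mu)$ supported near $u=0$ and off the singular part, check $\varphi\in L^2(\mu)$ using $p(u)\sim\ell(1/u)|u|^{1-2H}$ and the condition $\rho<H-1$ (you should invoke Potter's theorem to control the slowly varying factor, as the paper does, but that is minor), and read off the asymptotics $h(n)\sim cn^{\rho}$ from a main term plus a rapidly decaying remainder. Your choice of main term, $g(u)=\operatorname{Re}[(1-e^{iu})^{-\alpha}]$ with its explicitly positive coefficients $a_k$, is a nice variant of the paper's $|u|^{-\rho-1}$.

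The genuine gap is exactly the step you flag: positivity of $h(n)$ for \emph{all} $n$. Your two regimes do not obviously overlap: regime 1 covers $|n|\lesssim\delta^{-1}$, while the threshold for regime 2 is governed by $\|(g(1-\eta))''\|_\infty$, which blows up like a negative power of $\delta$ of order larger than $1$ as $\delta\to0$, so shrinking $\delta$ widens rather than closes the intermediate window (at least with the crude integration-by-parts bound). More importantly, your fallback correction built from reproducing kernels $n\mapsto\E[\xi_k\xi_n]+\E[\xi_{-k}\xi_n]$ is not guaranteed to be nonnegative: under \eqref{eq:specDens} alone the covariance function of $(\xi_n)$ can change sign infinitely often (compare Remark \ref{rem:fgn}), so adding such a term to fix finitely many bad $n$ can create new sign violations elsewhere, and controlling that would require exactly the kind of uniform positivity you are missing. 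The paper's resolution of this point is worth adopting: take a smooth even $g_0\ge0$ supported in $[-u_0/2,u_0/2]$ with $\hat g_{0,n}\neq0$ for the finitely many bad indices, set $f:=g_0*\overline{g_0}$, so that $\hat f_n=|\hat g_{0,n}|^2\ge0$ for \emph{every} $n$, $\hat f_n>0$ at the bad indices, and $\hat f_n=O(n^{-2})=o(n^{\rho})$ since $f\in C^2$; then $\varphi_2:=f/p$ localized near $0$ lies in $L^2(\mu)$ and yields $h_2(n)=\hat f_n$. The combination $c_1h_1+c_2h_2$ is then everywhere positive with the right asymptotics. With this replacement for your final adjustment, your proof closes; as written, the positivity step is incomplete.
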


\begin{proof}
Recall that $h \in \mathcal{H}_H(\xi)$ if and only if there is a function $\varphi \in L^2(\mu)$ with $h(n) = \int_{(-\pi,\pi]} \varphi(u) e^{-inu} \di\mu(u)$, see e.g.\ Comment 2.2.2 (c) in \cite{Ash1975}. %Ash 2.2.2 (c) + Isomorphism (p. 59)
In order to prove the Proposition, we will first consider a function $\varphi_1 \in L^2(\mu)$ such that the corresponding function $h_1 \in \mathcal{H}_H(\xi)$ has the correct asymptotic behavior.
This function can attain non-positive values at finitely many times. To fix this, we will construct afterwards another function $\varphi_2 \in L^2(\mu)$ such that the corresponding function $h_2 \in \mathcal{H}_H(\xi)$ is non-negative, takes positive values when $h_1$ takes non-positive values and decays faster than $h_1$. 
Then, for suitable constants $c_1,c_2>0$, the function $h = c_1 h_1 + c_2 h_2$  has the required properties.

\emph{Construction of $h_1$:}
Due to \eqref{eq:specDens}, there is a function $\tilde{\ell}$ and a constant $u_0>0$ such that $p(u)=\tilde{\ell}(u) \vert u \vert^{1-2H}$ for $u \in [-u_0,u_0]$ and $\tilde{\ell}$ is slowly varying at zero. By Potter's theorem, see 
 Theorem 1.5.6 in \cite{Bingham1987}, $u_0$ can be chosen such that $\tilde{\ell}(u_0)/\tilde{\ell}(u) \leq A \left( \frac{\vert u \vert}{u_0} \right)^{-\delta}$ for $\vert u \vert < u_0$, fixed $A>1$ and fixed $0 < \delta < 2(H-1-\rho)$. 
We set
\begin{equation*}
\varphi_1(u) := \begin{cases} {\vert u \vert^{2H-2-\rho}}/{\tilde{\ell}(u)}, & u \in [-u_0,u_0] \cap \text{supp}( \mu_s)^C,\\ 0, & \text{otherwise}. \end{cases}
\end{equation*}
Then, $\varphi_1 \in L^2(\mu)$ because
\begin{align*}
\int_{(-\pi,\pi]} \vert \varphi_1(u) \vert^2 \di \mu (u)
&= 
\int_{-u_0}^{u_0} \frac{\vert u \vert^{2H -3 - 2\rho  }}{\tilde{\ell}(u)}  \di u \\
&\leq
\frac{A}{\tilde{\ell}(u_0)} \int_{-u_0}^{u_0} \vert u \vert^{2 H -3 -2\rho } \left( \frac{\vert u \vert}{u_0} \right)^{-\delta} \di u < \infty.
\end{align*}
Here we used that $2H-3-2\rho-\delta>-1$.
Moreover,
\begin{align*}
\int_{(-\pi,\pi]} \cos(nu) \varphi_1(u)  \di \mu(u) 
&=
\int_{-u_0}^{u_0} \cos(nu) \vert u \vert^{-\rho - 1 } \di u \\
&=
n^{\rho} \int_{-n u_0}^{n u_0} \cos(v) \vert v \vert^{-\rho-1} \di v \\
&=
2 n^{\rho} \int_{0}^{n u_0} \cos(v) \vert v \vert^{-\rho-1} \di v .
\end{align*}
Since $-\rho-1<0$, it is easy to show, using the Leibniz criterion and the concavity of $(\cdot)^{-\rho-1}$, that the latter integral converges to a constant $c/2>0$. 
Thus, 
\begin{equation*}
h_1(n) = \int_{(-\pi,\pi]} \varphi_1(u) e^{-inu} \di \mu(u) \sim c  n^\rho.
\end{equation*}

\emph{Construction of $h_2$:} 
Choose $n_0$ such that $h_1$ attains only positive values for $\vert n \vert > n_0$.
Let $g \in C^1$ be an even real-valued function with support contained in $[-u_0/2,u_0/2]$ such that the Fourier coefficients for $\vert n \vert \leq n_0$ do not vanish, e.g. take any smooth even function $g$ with $g(u)>0$ for $\vert u \vert < \min(u_0/2,\pi/(2n_0))$ and $g(u)=0$ otherwise.  Then, the function $f $ given by $f(u):=\frac{1}{2\pi}\int_{-\pi}^\pi g(v)\overline{g}(u-v)\di v$ 
has Fourier coefficients $\hat{f}_n = \vert \hat{g}_n \vert^2$. In particular $\hat{f}_n > 0$ for $\vert n \vert \leq n_0$. 
Moreover, $f \in C^2$ because $f$ is a convolution of two differentiable functions. Thus, we have
\begin{equation*}
0 \leq \hat{f}_n = \frac{1}{(in)^2} (\widehat{f''})_n \leq \frac{\sup_{x \in (-\pi,\pi]} \vert f''(x) \vert}{\vert n \vert^2} \quad \text{for} \quad n \in \Z \setminus \{0\}.
\end{equation*}
Now, we consider the function
\begin{align*}
\varphi_2(u) := \begin{cases} \frac{f(u)}{\vert u \vert^{1-2H} \tilde{\ell}(u)}, &  u \in [-u_0,u_0] \cap \text{supp}( \mu_s)^C,\\ 0 ,& \text{otherwise} . \end{cases} 
\end{align*}
Let $M$ denote the maximum of $f$,
then
\begin{align*}
\int_{(-\pi,\pi]} \vert \varphi_2(u) \vert^2 \di \mu(u)
&\leq
\int_{-u_0}^{u_0} \frac{M^2}{\vert u \vert^{1-2H} \tilde{\ell}(u)} \di u \\
&\leq
\frac{A}{\tilde{\ell}(u_0)} \int_{-u_0}^{u_0} \frac{M^2}{\vert u \vert^{1-2H} } \left( \frac{\vert u \vert}{u_0} \right)^{-\delta} \di u
<
\infty,
\end{align*}
since $2H-1-\delta>-1$.
Furthermore, we have by construction of $\varphi_2$
\begin{align*}
h_2(n)
&=
\int_{(-\pi,\pi]} \varphi_2(u) e^{-inu} \di \mu(u)
=
\int_{-\pi}^{\pi} f(u) e^{-inu} \di u = \hat{f}_n. 
\end{align*}
\end{proof}

As a corollary of Proposition \ref{prop:RKHS}, we show the existence of functions with certain asymptotic behavior in the RKHSs of $(S_n)_{n \in \Z}$ and $(I_n)_{n \in \Z}$, respectively.

\begin{cor}
\label{cor:RKHS}
Let $H\in(0,1)$, $\rho \in (-1,H-1)$ and let $\mathcal{H}_H(S)$ and $\mathcal{H}_H(I)$ denote the RKHS of the processes $(S_n)_{n \in \Z}$ and $(I_n)_{n \in \Z}$, respectively. Then, if \eqref{eq:specDens} is satisfied, 
there are functions $f \in \mathcal{H}_H(S)$, $g \in \mathcal{H}_H(I)$ such that $f$ is odd with $f(n) > 0$ for $n>0$ and $f(n) \sim n^{\rho+1}$ as $n \to \infty$ whereas $g$ is even and positive on $\Z \setminus \{0\}$ with $g(n) \sim n^{\rho+2}$ as $n \to \infty$.
\end{cor}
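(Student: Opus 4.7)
The plan is to produce both $f$ and $g$ from a single element of a common ambient $L^{2}$-space. Fix an even, positive $h\in\mathcal{H}_{H}(\xi)$ with $h(n)\sim n^{\rho}$ via Proposition~\ref{prop:RKHS}, and choose $Y$ in the $L^{2}$-closure $\mathbb{H}$ of $\spn\{\xi_{n}:n\in\Z\}$ satisfying $h(n)=\EE{\xi_{n}Y}$. Since $(\xi_{n})$, $(S_{n})$ and $(I_{n})$ generate the same closed linear span in $L^{2}$, evaluating $\EE{\cdot\,Z}$ against $\xi_{n}$, $S_{n}$ or $I_{n}$ produces elements of $\mathcal{H}_{H}(\xi)$, $\mathcal{H}_{H}(S)$ or $\mathcal{H}_{H}(I)$, respectively, for any $Z\in\mathbb{H}$.

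The first key step is the time-reversal isometry $T:\mathbb{H}\to\mathbb{H}$ determined on generators by $T\xi_{n}:=\xi_{1-n}$. Stationarity of $(\xi_{n})$ gives $\E\xi_{n}\xi_{m}=\E\xi_{1-n}\xi_{1-m}$, so $T$ is inner-product preserving on the span and extends to an isometric involution, hence to a self-adjoint operator, on $\mathbb{H}$. Setting $Y':=(Y+TY)/2$ and combining self-adjointness of $T$ with the evenness of $h$ yields $\EE{\xi_{k}Y'}=(h(k)+h(k-1))/2$, which is invariant under the reflection $k\mapsto 1-k$. I then put $f(n):=\EE{S_{n}Y'}\in\mathcal{H}_{H}(S)$ and $g(n):=\EE{I_{n}Y'}\in\mathcal{H}_{H}(I)$.

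Expanding $S_{n}$ as a telescoping sum of $\xi_{k}$'s (via $S_{0}=0$ and $S_{n}-S_{n-1}=\xi_{n}$) and substituting the formula for $\EE{\xi_{k}Y'}$, a direct computation gives
\[
f(n)=\mathrm{sgn}(n)\left(\frac{h(0)+h(|n|)}{2}+\sum_{k=1}^{|n|-1}h(k)\right),\qquad n\in\Z\setminus\{0\},
\]
together with $f(0)=0$. Oddness is forced by the reflection symmetry, positivity on $\{n>0\}$ by $h>0$, and since $\rho>-1$ the sum dominates the boundary terms to give $f(n)\sim n^{\rho+1}/(\rho+1)$. Using $I_{n}-I_{n-1}=(S_{n}+S_{n-1})/2$ and this formula for $f$, an analogous telescoping calculation — this time exploiting the oddness of $f$ — yields
\[
g(n)=\frac{f(|n|)}{2}+\sum_{k=1}^{|n|-1}f(k),\qquad n\in\Z\setminus\{0\},
\]
which is manifestly even, positive on $\Z\setminus\{0\}$, and satisfies $g(n)\sim n^{\rho+2}/((\rho+1)(\rho+2))$. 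A final constant rescaling yields $f(n)\sim n^{\rho+1}$ and $g(n)\sim n^{\rho+2}$, matching the statement.

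The only conceptual obstacle is that $n\mapsto\EE{S_{n}Y}$ itself fails to be odd, because the boundary term $h(0)$ produced by summing $\xi_{k}$ over negative indices does not match the value $h(|n|)$ produced on the positive side. The identity $T\xi_{n}=\xi_{1-n}$ symmetrizes the increment sequence about $k=1/2$, which is exactly the symmetry needed for the two partial sums to be negatives of one another after passing to $Y'$. Once $f$ is odd, the evenness of $g$ follows automatically from the parallel telescoping on $I$.
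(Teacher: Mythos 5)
Your proposal is correct, and its skeleton is the same as the paper's: take the even, positive $h\in\mathcal{H}_H(\xi)$ from Proposition~\ref{prop:RKHS}, represent it as $h(n)=\EE{\xi_n X}$ for some $X$ in the common closed linear span, and obtain $f$ and $g$ by summing once and twice (using that the spans of $(\xi_n)$, $(S_n)$, $(I_n)$ coincide). The genuine difference is your symmetrization $Y'=(Y+TY)/2$ via the time-reversal isometry $T\xi_n=\xi_{1-n}$, which centres the increment weights at $k=1/2$ so that $\EE{\xi_k Y'}$ is invariant under $k\mapsto 1-k$. This step is not cosmetic: the paper takes $f(n)=(\rho+1)\EE{S_nX}$ directly and asserts $-f(-n)=f(n)=(\rho+1)\sum_{k=1}^n h(k)$, but since $S_{-n}=-\sum_{k=-n+1}^{0}\xi_k$ one actually gets $-f(-n)=(\rho+1)\sum_{j=0}^{n-1}h(j)$, so the two sides differ by $(\rho+1)\bigl(h(0)-h(n)\bigr)\neq 0$; the paper's $f$ is odd only asymptotically, and likewise its $g$ is only asymptotically even. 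Your construction delivers the exact oddness and evenness claimed in the statement (and your boundary terms $\tfrac{1}{2}(h(0)+h(|n|))$ are $O(1)$, so the asymptotics $f(n)\sim n^{\rho+1}/(\rho+1)$ and $g(n)\sim n^{\rho+2}/((\rho+1)(\rho+2))$ survive the rescaling). Worth noting for perspective: the later applications of the corollary only use positivity and the growth rate, not exact parity, so the paper's argument suffices for its purposes, but yours proves the corollary as literally stated.
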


\begin{proof}
Let $h \in \mathcal{H}_H(\xi)$ be the positive and even function in Proposition~\ref{prop:RKHS} with $h(n) \sim n^\rho$. Then, by the definition of the RKHS, there is a random variable $X$ in the $L^2$-closure of the set $\spn\{\xi_n : n \in \Z\}$ 
with $h(n)=\EE{\xi_n X}$.
Now, let the functions $f,g$ be given by $f(n) ={(\rho+1)}\EE{S_n X}$ and $g(n) = {(\rho+1)(\rho+2)}\EE{I_n X}$, respectively.
Since the sets $\spn\{\xi_n : n \in \Z\}$, $\spn\{S_n : n \in \Z\}$ and $\spn\{I_n : n \in \Z\}$ coincide, we have $f \in \mathcal{H}_H(S)$ and $g \in \mathcal{H}_H(I)$.  
By $h(n) \sim n^\rho$ and the symmetry of $h$, we have $-f(-n) = f(n) = (\rho+1)\sum_{k=1}^n h(k) \sim n^{\rho+1}$ as $n \to \infty$. Thus, we have further $g(-n) = g(n) = (\rho+1)(\rho+2) \sum_{k=1}^{n-1} \EE{S_k X} + \EE{S_n X}/2 \sim n^{\rho+2}$ as $n \to \infty$.
\end{proof}

As a first application of Corollary \ref{cor:RKHS}, we compare the persistence probabilities of $(I_n)$ to a closely related process.
Let $(\bar{I}_n)_{n\in\Z}$ be the sequence given by $\bar{I}_n-\bar{I}_{n-1}:=S_n$ for $n \in \Z$ and $\bar{I}_0:=0$. This process is related to the process $(I_n)$ by the identity $\bar{I}_n = I_n + S_n/2$. Both processes are defined as integrals of stationary increments sequences that have FBM as scaling limit. In the context of this paper, the major difference between these processes is that $(I_n)$ vanishes only at $0$ and  satisfies $(I_n) \eqd (I_{-n})$ whereas $\bar{I}_{-1}=\bar{I}_0=0$ and $\bar{I}_1$ does not vanish. The symmetry property of $(I_n)$ resembles the continuous-time case and is needed in the proof of Theorem \ref{thm:I_n}. In the following corollary, we relate the persistence probabilities of both processes.

\begin{cor}
Let $(\xi_n)$ be a real valued stationary centered Gaussian sequence such that (\ref{eq:specDens}) holds. Then,
\[\PP{\bar{I}_n \leq 0 \ :\  -N-1 \leq n  \leq N} \leq \PP{I_n \leq 0 \ :\  \vert n \vert \leq N}.\]
If in addition $\EE{ \bar{I}_n \bar{I}_m} \geq 0$ for all $n,m \in \Z$, then one has
\begin{align*}
\PP{I_n \leq 0 \ :\  \vert n \vert \leq N}
\leq 
\PP{\bar{I}_n \leq 0 \ :\  \vert n \vert \leq N} \ell_0(N),
\end{align*}
where $\ell_0$ denotes a slowly varying function at infinity.
\end{cor}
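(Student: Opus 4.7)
The first inequality follows from a direct event inclusion. One checks from the defining recursions that
\[
I_n = \tfrac{1}{2}\bigl(\bar{I}_n + \bar{I}_{n-1}\bigr)\qquad\text{for every }n\in\Z,
\]
for instance by verifying this at $n = 0, \pm 1$ and propagating via the common increment structure $\bar{I}_n - \bar{I}_{n-1} = S_n$ and $I_n - I_{n-1} = (S_n+S_{n-1})/2$. Given this identity, if $\bar{I}_m \leq 0$ for every $m \in \{-N-1, \ldots, N\}$, then for each $|n|\leq N$ both $\bar{I}_n$ and $\bar{I}_{n-1}$ are non-positive, and therefore $I_n \leq 0$. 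Hence $\{\bar{I}_n \leq 0 : -N-1 \leq n \leq N\} \subseteq \{I_n \leq 0 : |n| \leq N\}$, and the first inequality follows.

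For the second inequality, my plan is to combine Proposition~\ref{prop:changeOfMeasure} with Corollary~\ref{cor:RKHS}. The point is to introduce a deterministic shift $f : \Z \to \R$ in the RKHS of $(\bar{I}_n)$ (or of $(I_n)$) with $\|f\|^2$ a finite constant, so that the change-of-measure prefactor in Proposition~\ref{prop:changeOfMeasure} is of order $\exp(O(\sqrt{\log N}))$, which is slowly varying in $N$. Using Corollary~\ref{cor:RKHS} I pick $f$ with $f(n) \sim n^{\rho+2}$ for some $\rho \in (-1, H-1)$; in particular $f(n)$ grows strictly faster than the typical fluctuation size $|S_n| \sim n^{H}\sqrt{\ell(n)}$. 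On the high-probability event $B_N = \{|S_n| \leq 2 f(n) : |n| \leq N\}$, the constraint $I_n \leq 0$ forces $\bar{I}_n = I_n + S_n/2 \leq f(n)$, so one is reduced, after applying Proposition~\ref{prop:changeOfMeasure} to $(\bar{I}_n)$ with shift $-f$, to comparing $\PP{\bar{I}_n \leq f(n) : |n| \leq N}$ with $\PP{\bar{I}_n \leq 0 : |n| \leq N}$.

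The main obstacle is that the discrepancy $\bar{I}_n - I_n = S_n/2$ is genuinely random and of typical size $n^{H}$, so no deterministic shift of bounded RKHS norm can pathwise dominate it for all $n$ simultaneously. Accordingly, the argument has to combine the change of measure with a Gaussian tail bound on $(S_n)$ to estimate $\PP{B_N^c}$, and the positive correlation hypothesis $\EE{\bar{I}_n \bar{I}_m} \geq 0$ enters precisely at this step: it supplies a Slepian- or FKG-type comparison that lets one absorb the exceptional event $B_N^c$ and pass between $\{\bar I_n \leq f(n)\}$ and $\{\bar I_n \leq 0\}$ without spoiling the polynomial order $N^{-(1-H)}$ expected of $\PP{\bar{I}_n \leq 0 : |n| \leq N}$. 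Once these pieces are put together, one obtains the stated bound with $\ell_0(N) = \exp(O(\sqrt{\log N}))$ slowly varying at infinity.
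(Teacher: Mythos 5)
Your treatment of the first inequality is correct and identical to the paper's: $I_n=(\bar I_n+\bar I_{n-1})/2$ gives the event inclusion directly.

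For the second inequality you have assembled the right ingredients (a superlinearly growing shift $f$ from Corollary~\ref{cor:RKHS}, Proposition~\ref{prop:changeOfMeasure}, Gaussian tails for $S_n$, and Slepian's lemma via the positivity hypothesis), but the way you propose to combine them has a genuine gap. Your event $B_N=\{\vert S_n\vert\leq 2f(n)\ :\ \vert n\vert\leq N\}$ is \emph{not} a high-probability event in the sense you need: for small $n$ (say $n=1$) the probability $\PP{\vert S_1\vert>2f(1)}$ is a fixed positive constant, so $\PP{B_N^c}$ is bounded away from $0$ and cannot be ``absorbed'' into a quantity of order $N^{-(1-H)+o(1)}$. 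The Gaussian tail bound $\PP{S_n/2>n^{1+H-\varepsilon}}\leq e^{-cn^{2-4\varepsilon}}$ only becomes useful once $n$ exceeds a threshold growing with $N$. This is exactly why the paper splits the index set at $\log(N)$: the tail estimate is applied only for $\log(N)<\vert n\vert\leq N$, where it yields a total contribution $2N^{-2}$, negligible against $N^{-(1-H)+o(1)}$ by Theorem~\ref{thm:I_n}; the remaining block $\vert n\vert\leq\log(N)$ is handled separately, and \emph{this} is where the hypothesis $\EE{\bar I_n\bar I_m}\geq 0$ actually enters -- via Slepian's lemma to factor $\PP{\bar I_n\leq 0:\vert n\vert\leq N}$ into the two blocks, with the small block bounded below by $c^{-2}\log(N)^{-2}$ using Theorem~\ref{thm:S_n}. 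In your sketch the positivity hypothesis is instead credited with ``absorbing $B_N^c$'' and with passing between $\{\bar I_n\leq f(n)\}$ and $\{\bar I_n\leq 0\}$, neither of which is what it does (the latter passage is a change-of-measure step, not a correlation-inequality step).

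Two further points you would need to make precise. First, if you apply Proposition~\ref{prop:changeOfMeasure} to $(\bar I_n)$ rather than to $(I_n)$, the prefactor involves $\log\left(1/\PP{\bar I_n\leq 0:\vert n\vert\leq N}\right)$, so you need a polynomial lower bound on that probability before you can call the prefactor slowly varying; this is obtainable (Slepian plus Theorem~\ref{thm:S_n} give $\geq c^{-2}N^{-2}$), but it is an extra step you do not address. The paper sidesteps this by shifting $(I_n)$, for which the polynomial lower bound is already available. Second, you need $f$ in the RKHS of the process you shift; Corollary~\ref{cor:RKHS} is stated for $\mathcal{H}_H(I)$, and although a corresponding function exists in the RKHS of $(\bar I_n)$ (the spans coincide), this too should be said. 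As written, the proposal is a plan with the decisive quantitative step missing rather than a proof.
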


\begin{proof}
The first inequality follows directly from the definitions of the processes, since one has $I_n=(\bar{I}_n+\bar{I}_{n-1})/2$ for all $n \in \Z$. Using Slepian's Lemma and the additional assumption about the correlations of $(\bar{I}_n)$, we obtain
\begin{align}
\begin{aligned}
\label{eq:IbarSlepian}
\PP{\bar{I}_n \leq 0 \ :\ \vert n \vert \leq N} 
\geq & \,
\PP{\bar{I}_n \leq 0 \ :\ \vert n \vert \leq \log(N)}\\
&\cdot \PP{\bar{I}_n \leq 0 \ :\ \log(N) < \vert n \vert \leq N}.
\end{aligned}
\end{align}
By the same argument and Theorem \ref{thm:S_n}, we have
\begin{align*}
\PP{\bar{I}_n \leq 0 \ :\ \vert n \vert \leq \log(N)} 
\geq & \,
\PP{\bar{I}_n \leq 0 \ :\ 0 \leq n  \leq \log(N)}\\
&\cdot \PP{\bar{I}_n \leq 0 \ :\  -\log(N) \leq n < 0 } \\
\geq & \,
\PP{S_n \leq 0 \ :\ 0 \leq n  \leq \log(N)}\\
&\cdot \PP{S_n \geq 0 \ :\  -\log(N) \leq n < 0 } \\
\geq & \,
c^{-2} \log(N)^{-2}.
\end{align*}
Thus, the first factor on the right hand side in \eqref{eq:IbarSlepian} can be estimated by a slowly varying function at infinity.
It remains to relate the second factor on the right hand side in \eqref{eq:IbarSlepian} to the probability $\PP{I_n \leq 0 \ :\  \vert n \vert \leq N}$.

By Corollary \ref{cor:RKHS}, for $\varepsilon \in (0,1/4)$, there is a symmetric function $f \in \mathcal{H}_H(I)$ such that $f(n) \geq \vert n \vert^{1+H-\varepsilon}$ for all $n \in \Z$. 
Obviously, we have
\begin{align}
\label{eq:IbarDecomp}
\begin{aligned}
&\PP{I_n \leq -n^{1+H-\varepsilon} \ :\ \log(N) < \vert n \vert \leq N} \\
& \qquad \qquad \qquad \leq 
\PP{\bar{I}_n \leq 0 \ :\ \log(N) < \vert n \vert \leq N}\\
&\qquad \qquad \qquad \quad +
\PP{ \exists n \ :\ \bar{I}_n - I_n > n^{1+H-\varepsilon}  ,\ \log(N) < \vert n \vert \leq N}.
\end{aligned}
\end{align}
We will see that the second term on the right hand side is of lower order, while the term on the left hand side can be related to $\PP{I_n \leq 0 \ :\  \vert n \vert \leq N}$. 
For this purpose, let $X$ denote a standard normal random variable. Then, by using $\bar{I}_n-I_n=S_n/2$ in the first step and \eqref{eq:varS} in the second step, we have for $N$ large enough
\begin{align}
\label{eq:IbarHighJumps}
\begin{aligned}
&\PP{ \exists n \ :\ \bar{I}_n - I_n > n^{1+H-\varepsilon}  ,\ \log(N) < \vert n \vert \leq N}\\
& \quad \quad  \leq 
2\sum_{n=\lceil \log(N) \rceil}^N \PP{S_n/2 > n^{1+H-\varepsilon}}\\
& \quad \quad  \leq 
2\sum_{n=\lceil \log(N) \rceil}^N \PP{n^{H+\varepsilon} X > n^{1+H-\varepsilon}}\\
& \quad \quad  \leq 
2N \PP{ X > \log(N)^{1-2\varepsilon}}\\
& \quad \quad  \leq
2N e^{-(\log(N))^{2-4\varepsilon}/2}\\
& \quad \quad  \leq
2N^{-2}.
\end{aligned}
\end{align}
In the fourth step above, we used the standard estimate $\PP{X > x} \leq e^{-x^2/2}$ for $x\geq 1$. 
Finally, using Proposition \ref{prop:changeOfMeasure}, we obtain for $N$ large enough 
\begin{align*}
\PP{I_n \leq 0 \ :\  \vert n \vert \leq N} 
\leq & 
\PP{I_n \leq 0 \ :\ \log(N) < \vert n \vert \leq N}  \\
\leq &
\PP{I_n \leq -f(n) \ :\ \log(N) < \vert n \vert \leq N} \\ & \cdot e^{\sqrt{2\|f\|^2 \log(1/\PP{I_n \leq 0  \ :\ \vert n \vert \leq N})}-\|f\|^2/2} \\
\leq &
\PP{I_n \leq -n^{1+H-\varepsilon} \ :\ \log(N) < \vert n \vert \leq N} \\ & \cdot e^{\sqrt{2\|f\|^2 \log(1/\PP{I_n \leq 0 \ :\ \vert n \vert \leq N})}-\|f\|^2/2}.
\end{align*}
This, together with \eqref{eq:IbarDecomp}, \eqref{eq:IbarHighJumps} and Theorem \ref{thm:I_n}, finishes the proof.
\end{proof}

As another application of Corollary \ref{cor:RKHS}, we can give an improvement of Theorem 11 in \cite{Aurzada2016aUnp}:

\begin{cor}
\label{cor:thm11aurzada}
Let $(\xi_n)$ be a real valued stationary centered Gaussian sequence such that \eqref{eq:specDens} holds. Then, for every $b \in \R$ there is some constant $c>0$ such that
\begin{align*}
N^{-(1-H)} \sqrt{\ell(N)} e^{-c\sqrt{\log(N)}}
&\leq
\PP{\max_{1\leq n \leq N} S_n \leq b}\\
&\leq
N^{-(1-H)}\sqrt{\ell(N)}e^{c \sqrt{\log(N)}} \quad \forall N \in \mathbb{N}.
\end{align*}
\end{cor}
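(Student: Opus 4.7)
The plan is to deduce the two-sided estimate from \eqref{eq:thm11aurzada}, which gives the matching bounds only at $b=0$ (lower) and for $b<0$ (upper), by means of the change-of-measure Proposition \ref{prop:changeOfMeasure}, with a shift supplied by Corollary \ref{cor:RKHS}. Concretely, pick any $\rho \in (-1,H-1)$ and let $f \in \mathcal{H}_H(S)$ be the odd function from Corollary \ref{cor:RKHS} with $f(n)>0$ for $n>0$ and $f(n) \sim n^{\rho+1}$. Since $f$ is positive on $\{1,2,\ldots\}$ and tends to infinity, $m := \inf_{n\geq 1} f(n) > 0$, so for any $\gamma > 0$ the scaled function $\tilde f_\gamma := (\gamma/m) f$ lies in $\mathcal{H}_H(S)$, satisfies $\tilde f_\gamma(n) \geq \gamma$ for every $n \geq 1$, and has RKHS norm $\|\tilde f_\gamma\|^2 = (\gamma/m)^2 \|f\|^2$ \emph{independent of $N$}.

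For the lower bound, the case $b \geq 0$ is immediate from \eqref{eq:thm11aurzada}, since the factor $(\log N)^{-1/2}$ can be absorbed into $e^{-c\sqrt{\log N}}$. For $b<0$, set $\gamma := -b > 0$. Because $\tilde f_\gamma(n) \geq -b$ on $\{1,\ldots,N\}$,
\[
\PP{S_n + \tilde f_\gamma(n) \leq 0 \ :\ 1 \leq n \leq N} \leq \PP{S_n \leq b \ :\ 1 \leq n \leq N}.
\]
Writing $A_0 := \{x : x_n \leq 0 \text{ for all } 1 \leq n \leq N\}$, the left-hand side equals $\PP{S + \tilde f_\gamma \in A_0}$. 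The lower estimate of Proposition \ref{prop:changeOfMeasure} together with $\|\tilde f_\gamma\|^2 = O(1)$ and the lower bound $\PP{S \in A_0} \geq c^{-1} N^{-(1-H)}\sqrt{\ell(N)}/\sqrt{\log N}$ from \eqref{eq:thm11aurzada} (which forces $\log(1/\PP{S \in A_0}) = O(\log N)$) then yields the desired factor $e^{-c\sqrt{\log N}}$.

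For the upper bound, the case $b \leq 0$ is immediate from \eqref{eq:thm11aurzada} applied with $a = \vert b \vert/2 + 1$, say. For $b > 0$, fix any $a > 0$ and use $\tilde f_{b+a}$, so $\tilde f_{b+a}(n) \geq b+a$ on $\{1,\ldots,N\}$. Setting $A_b := \{x : x_n \leq b \text{ for all } 1 \leq n \leq N\}$, we have
\[
\PP{S + \tilde f_{b+a} \in A_b} = \PP{S_n \leq b - \tilde f_{b+a}(n) \ :\ 1 \leq n \leq N} \leq \PP{S_n \leq -a \ :\ 1 \leq n \leq N} \leq c\, N^{-(1-H)}\sqrt{\ell(N)}
\]
by \eqref{eq:thm11aurzada}. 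Rearranging Proposition \ref{prop:changeOfMeasure} yields $\PP{S \in A_b} \leq e^{\sqrt{2\|f\|^2 \log(1/\PP{S \in A_b})} + \|f\|^2/2}\PP{S + f \in A_b}$; the trivial polynomial lower bound $\PP{S \in A_b} \geq \PP{S \in A_0} \geq c' N^{-1}$ controls the logarithm in the exponent and delivers the factor $e^{c\sqrt{\log N}}$.

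The main obstacle, were Corollary \ref{cor:RKHS} not already available, would be producing an RKHS element of $(S_n)$ with strictly positive infimum on $\{1,2,\ldots\}$ and norm independent of $N$; this is exactly the functionality that Corollary \ref{cor:RKHS} provides and that ensures the shift costs only the advertised $e^{\pm c\sqrt{\log N}}$ factor. A minor bookkeeping point is the self-referential appearance of $\PP{S \in A_b}$ inside the logarithm in the upper bound; this is harmless because a rough polynomial lower bound suffices there.
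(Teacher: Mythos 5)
Your proposal is correct and follows essentially the same route as the paper: rescale the RKHS function from Corollary \ref{cor:RKHS} to get a shift bounded below by a positive constant with $N$-independent norm, then use the unconditional lower bound \eqref{eq:changeOfMeasure1} of Proposition \ref{prop:changeOfMeasure} (rearranged, for the upper estimate) together with \eqref{eq:thm11aurzada} and the crude polynomial lower bound to control the logarithm. The only slip is the sub-case $-2 < b \leq 0$ of the upper bound, where $a = \vert b \vert/2 + 1$ gives $-a < b$ and the inclusion fails; this is harmless since your shift argument for $b>0$ applies verbatim whenever $b + a > 0$.
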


\begin{proof}
Let $a>0$. By Corollary \ref{cor:RKHS}, there is a function $f \in \mathcal{H}_H(S)$ with $f(n) \geq 2a$ for all $n \geq 1$. Further, using the lower estimate in \eqref{eq:thm11aurzada}, we have for $N$ large enough \[  N^{-1} \leq \PP{ S_n \leq a \ :\ 1 \leq n \leq N } .\]
This together with Proposition \ref{prop:changeOfMeasure} yields for $N$ large enough
\begin{align*}
&\PP{S_n \leq -a \ :\ 1 \leq n \leq N}\\
&\qquad\qquad=
\PP{ S_n + f(n) \leq -a + f(n) \ :\ 1 \leq n \leq N }\\
&\qquad\qquad\geq
\PP{ S_n + f(n) \leq a \ :\ 1 \leq n \leq N }\\
&\qquad\qquad\geq
\PP{ S_n \leq a \ :\ 1 \leq n \leq N } e^{-\sqrt{2 \|f\|^2 \log(N) } - \|f\|/2}
.
\end{align*}
Combining this with \eqref{eq:thm11aurzada} finishes the proof.
\end{proof}

\begin{rem}
\label{rem:fgn}
In Theorem 11 in \cite{Aurzada2016aUnp}, the authors assume $\inf_{n \geq 1} \E S_nS_1>0$ to get the change of measure argument to work. For instance, the fractional Gaussian noise process $(\xi_n^{\textsc{fgn}})$ satisfies this assumption. This can be easily verified by using that $\E S_n^2 = n^{2H}$.
In general, this does not remain true if one only has \eqref{eq:varS}. For example, consider the case where $\ell(x) = 1 + {\cos(\pi x )}/{\log(x)}$ in \eqref{eq:varS}. Then, one has $\sum_{j=1}^n \sum_{k=1}^n \E \xi_j \xi_k \sim n^{2H}$ but the function $\E S_n S_1$ attains infinitely often positive and negative values.
\end{rem}

\begin{rem}
Consider the function $f \colon \mathbb{N} \to \R$ with $f(n)=\mathbbm{1}_{n=1}$. 
Clearly, $f$ is in the RKHS of the process $(\xi_n)_{n \geq 1}$ if and only if $\xi_1 \not\in \mathbb{H}_2$, where $\mathbb{H}_2$ denotes the $L^2$-closure of the set $\spn\{\xi_n : n \geq 2\}$.  
It is well known that this condition is equivalent to the Kolmogorov condition
\begin{equation}
\label{eq:kolmogorovsFormula}
\int_{-\pi}^\pi \log(p(u)) \di u > - \infty,
\end{equation}
where $p$ denotes the density of the component of the spectral measure of $(\xi_n)$ that is absolutely continuous with respect to the Lebesgue measure, see e.g. Theorem 2.5.4 in \cite{Ash1975}.
In this case, all constant functions are in the RKHS of the process $(S_n)_{n \geq 1}$.
Hence, the proof of Corollary \ref{cor:thm11aurzada} still works if we replace condition \eqref{eq:specDens}  by \eqref{eq:varS} and \eqref{eq:kolmogorovsFormula}.
\end{rem}

\section{\texorpdfstring{Proof of Theorem~\ref{thm:S_n}}{Proof of Theorem 1}}

\subsection*{Upper bound}
Let $T_N$ denote the time 
where the process $(S_n)_{n\in\mathbb{Z}}$ attains its maximum on $\{0,1,\ldots,N\}$.
Since $(S_n)_{n\in\mathbb{Z}}$ has stationary increments and $\PP{S_j = S_k}=0$ for $j \neq k$, the upper bound follows from 
\begin{align*}
N \cdot \PP{S_n \leq 0 \ :\  -N \leq n \leq N}
&\leq
\sum_{k=1}^N \PP{S_n \leq 0 \ :\  -k \leq n \leq N-k} \\
&=
\sum_{k=1}^N \PP{S_n \leq S_k \ :\  0 \leq n \leq N}\\
&=
\sum_{k=1}^N \PP{T_N=k} \\
&\leq
1.
\end{align*}

\subsection*{Lower bound}
Using again the stationary increments of $(S_n)_{n\in\mathbb{Z}}$, we obtain
\begin{align}
\label{eq:proofSlow}
\begin{aligned}
(N+1)& \cdot \PP{S_n \leq 0 \ :\  -N \leq n \leq N}\\
&\geq
\sum_{k=0}^N \PP{S_n \leq 0 \ :\  -N-k \leq n \leq 2N-k} \\
&=
\sum_{k=0}^N \PP{S_n \leq S_{N+k} \ :\  0 \leq n \leq 3N}\\
&=
\sum_{k=0}^N \PP{T_{3N} = N+k}\\
&=
\PP{T_{3N} \in [N,2N]}.
\end{aligned}
\end{align}
Now, we consider the continuous functional $F \colon ( D([0,1]) , \|\cdot\|_\infty) \to (\mathbb{R},\vert \cdot \vert)$ given by  \[F(g) = \left( \sup_{x \in \left(\frac{1}{3},\frac{2}{3}\right)} g(x) - \sup_{x \in \left(0,\frac{1}{3}\right) \cup \left(\frac{2}{3},1\right)} g(x) \right)_+ \wedge 1, \] 
where 
$(x)_+ := \max(x,0)$ for $x \in \mathbb{R}$ and $D([0,1])$ denotes the set of all c\`adl\`ag functions on $[0,1]$.
We set \[Y_N(t) = \frac{1}{N^H \ell(N)^{1/2}} \sum_{k=1}^{\lfloor Nt \rfloor} \xi_k.\]
Due to (\ref{scalingLimit}), it follows that
\begin{align*}
\PP{T_{3N} \in [N,2N]} &= \EE{\mathbbm{1}_{T_{3N} \in [N,2N]}} \geq \E F\left( Y_N \right) \to c_0 > 0, \quad \text{as } N \to \infty.
\end{align*}
This and \eqref{eq:proofSlow} show the lower bound.

\section{\texorpdfstring{Proof of Theorem~\ref{thm:I_n}}{Proof of Theorem 2}}

The proof is structured as follows: We first consider the functional 
\begin{equation*}
F_N := \sum_{k=1}^{N-1} \left( \gamma_{k,k}^- - \gamma_{k,N-k}^+ \right)_+,
\end{equation*}
where for $k \in \mathbb{Z}$ and $m \in \mathbb{N}$
\begin{align*}
\gamma_{k,m}^- := \min_{1 \leq n \leq m} \frac{I_k-I_{k-n}}{n} 
\quad \text{and} \quad
\gamma_{k,m}^+ := \max_{1 \leq n \leq m} \frac{I_{k+n}-I_{k}}{n},
\end{align*}
and determine the 
polynomial order of $\E F_N$ as $N \to \infty$.
Then, we relate the quantity $\E F_N$ to the probability
\begin{equation}
\label{eq:probProof}
\tilde{p}_N := \PP{I_n + \vert n \vert \leq 0, \vert n \vert \leq N}.
\end{equation}
Finally, we obtain the asymptotic order of 
\begin{equation}
\label{eq:prob}
p_N:=\PP{I_n \leq 0 \ :\ \vert n \vert \leq N}
\end{equation}
from (\ref{eq:probProof}) by using a change of measure argument (Proposition \ref{prop:changeOfMeasure} and Corollary \ref{cor:RKHS}).

\subsection*{Upper bound for $\E F_N$}
In the following, we fix $N$ and write $\gamma_k^-=\gamma_{k,k}^-$ and $\gamma_k^+=\gamma_{k,N-k}^+$ to ease notation.
Let $C_N \colon [0,N] \to \R$ denote the concave majorant of $I_n$ on $[0,N]$, i.e., $C_N$ is the smallest concave function with $I_n \leq C_N(n)$. Obviously, $C_N$ is a piecewise linear function and we denote by $\{k_{1},k_{2},\ldots\}$ (depending on $N$) its nodal points. At these points the slope on the left is $\gamma_{k_{i}}^-$ and the slope on the right is $\gamma_{k_{i}}^+$. Further, we note that $\gamma_{k}^- - \gamma_{k}^+ \geq 0$ if and only if $k$ is a nodal point of $C_N$. In that case one has $\gamma_{k_{i}}^+=\gamma_{k_{i+1}}^-$. Thus,
\begin{equation*}
F_N =  \sum_{k=1}^{N-1} \left( \gamma_{k}^- - \gamma_{k}^+ \right)_+ = \sum_i \left( \gamma_{k_{i}}^- - \gamma_{k_{i+1}}^- \right) = \gamma_{0}^+ - \gamma_{N}^-.
\end{equation*}
By $\E \tilde{S}_N = 0$, (\ref{eq:tt}) and $(I_n) \eqd (I_{-n})$, we have
\begin{align*}
\EE{ -\gamma_{N}^-}
&= \EE{-\min_{1 \leq n \leq N} \frac{I_N-I_{N-n}}{n} }  \\
&= \EE{ \max_{1 \leq n \leq N} \frac{I_{N-n}-I_N-(-n)\tilde{S}_N }{n}} \\
&= \EE{ \max_{1 \leq n \leq N} \frac{I_{-n}}{n}} = \EE{ \max_{1 \leq n \leq N} \frac{I_n}{n}} = \E \gamma_{0}^+.
\end{align*}
Therefore,
\begin{equation}
\label{eq:EF}
\E F_N = 2 \E \gamma_{0}^+.
\end{equation}
Due to (\ref{eq:EF}), one obtains the upper estimate
\begin{align*}
\E F_N &= 2 \EE{ \max_{1 \leq n \leq N} \frac{\sum_{k=1}^n \tilde{S}_k}{n}} \\
&\leq 2 \EE{ \max_{1 \leq n \leq N} \frac{\sum_{k=1}^n \max_{1 \leq j \leq N }\tilde{S}_j}{n}}
= 2 \EE{ \max_{1 \leq j \leq N} \tilde{S}_j}.
\end{align*}
It can be obtained from (\ref{scalingLimit}) that
\begin{equation*}
\frac{1}{N^H \ell(N)^{1/2}} \EE{ \max_{1 \leq n \leq N} \tilde{S}_n} \to \EE{ \sup_{t \in [0,1] } W_H(t)} \in (0,\infty),
\end{equation*}
where $(W_H(t))$ is a fractional Brownian motion, see e.g.\ proof of Theorem~11 in \cite{Aurzada2016aUnp}. 
Thus, there is a constant $c$ such that for all $N$
\begin{equation}
\label{eq:EFleq}
\E F_N \leq c \, \ell(N)^{1/2} N^H . 
\end{equation}
In the following, $c$ will denote a varying positive constant independent of $N$ for ease of notation.

\subsection*{Lower bound for $\E F_N$}

Since $(\xi_n)$ is a stationary process, we have 
\[ \E S_j S_k  = \frac{1}{2} \left( \E S_j^2 + \E S_k^2 - \E S_{\vert j-k \vert}^2 \right) .\]
Consequently,
\[
\E \left(I_N+\frac{S_N}{2}\right)^2 = \sum_{j=1}^N \sum_{k=1}^N \E S_j S_k = \frac{1}{2} \sum_{j=1}^N \sum_{k=1}^N \left( \E S_j^2 + \E S_k^2 - \E S_{\vert j-k \vert}^2 \right). 
\]
Counting how often $\E S_k^2$ is added, yields  
\begin{align*}
\E \left(I_N+\frac{S_N}{2}\right)^2 &= 
\frac{1}{2} \sum_{k=1}^N N \E S_k^2 + \frac{1}{2} \sum_{k=1}^N N \E S_k^2 - \sum_{k=1}^N (N-k) \E S_k^2\\
&= \sum_{k=1}^N k \E S_k^2.
\end{align*}
Since $\E S_n^2 \sim n^{2H}\ell(n)$, we can apply Proposition 1.5.8 in \cite{Bingham1987} to obtain
\begin{equation}
\label{eq:VarInApriori}
\E \left(I_N+\frac{S_N}{2}\right)^2 \sim N^{2H+2} \ell(N) / (2H+2).
\end{equation}
Now, using the Cauchy-Schwarz Inequality, we have $\left\vert \E S_N I_N \right\vert \leq \sqrt{\E S_N^2 \E I_N^2}$ and we can thus conclude from \eqref{eq:VarInApriori} that 
\begin{equation}
\label{eq:EI_n}
\E I_N^2 \sim N^{2H+2} \ell(N) / (2H+2).
\end{equation}
In the following, we let $\| \cdot \|_2$ denote the norm $\|X\|_2=\EE{\vert X \vert^2}^{1/2}$. Moreover, we recall the identity $\E X_+ = (2 \pi)^{-1/2} \| X \|_2$ for a centered normal random variable $X$.
Now, we can give a lower bound for $\E F_N$. By (\ref{eq:EF}) and $\E I_1 = 0$, we have
\begin{align*}
\E F_N &= 2 \EE{\max_{1 \leq n \leq N} \frac{I_n}{n}}
=
2 \EE{\max_{1 \leq n \leq N} \frac{I_n}{n}-I_1} \\
&=
2 \E\left({\max_{1 \leq n \leq N} \frac{I_n}{n}-I_1}\right)_+ 
\geq
2 \E\left({\frac{I_N}{N}-I_1}\right)_+ \\
&=
\sqrt{2 / \pi} \left\| \frac{I_N}{N} - I_1 \right\|_2 
\geq
\sqrt{2/\pi} \left( \left\| \frac{I_N}{N} \right\|_2 - \left\|  I_1 \right\|_2 \right).
\end{align*}
Thus, by (\ref{eq:EI_n}), we have
\begin{equation}
\label{eq:EFgeq}
\E F_N \geq c^{-1} \ell(N)^{1/2} N^{H}.
\end{equation}

\subsection*{Upper bound for $\tilde{p}_N$}

In order to get an upper bound for the probability in \eqref{eq:probProof}, it is convenient to consider the random variable 
\begin{equation*}
\vartheta_N := \left( \gamma_{0,N}^- - \gamma_{0,N}^+ \right)_+.
\end{equation*}
We have
\begin{equation}
\label{eq:ineqVartheta}
\E \left( \gamma_{k}^- - \gamma_{k}^+ \right)_+ \geq \E{\vartheta_N}.
\end{equation}
To see this, note that by using (\ref{eq:tt}), we obtain
\begin{align*}
\gamma_{k,N}^- - \gamma_{k,N}^+ &= \min_{1 \leq n \leq N} - \frac{I_{k-n}-I_k-(-n)\tilde{S}_k}{n} - \max_{1 \leq n \leq N} \frac{I_{k+n}-I_k-n\tilde{S}_k}{n} \\
&\eqd \min_{1 \leq n \leq N} - \frac{I_{-n}}{n} - \max_{1 \leq n \leq N} \frac{I_n}{n}
= \gamma_{0,N}^- - \gamma_{0,N}^+.
\end{align*}
Combining this with $\gamma_k^- - \gamma_k^+ \geq \gamma_{k,N}^- - \gamma_{k,N}^+ $ shows (\ref{eq:ineqVartheta}).
Applying the Markov Inequality, we see that
\begin{equation}
\label{eq:chebyshev}
\E{\vartheta_N} \geq 2 \PP{ \vartheta_N \geq 2 }.
\end{equation}
Using \eqref{eq:EFleq}, \eqref{eq:ineqVartheta}, and \eqref{eq:chebyshev}, we thus obtain
\begin{align}
\label{eq:upperBound}
\begin{aligned}
c \, \ell(N)^{1/2} N^H  &\geq \E {F_N} = \sum_{k=1}^{N-1} \E \left( \gamma_k^- - \gamma_k^+ \right)_+ \\
&\geq (N-1) \E{\vartheta_N} \geq 2(N-1) \PP{ \vartheta_N \geq 2 } \\
&\geq  2(N-1) \PP{ \gamma_{0,N}^- \geq 1, \gamma_{0,N}^+ \leq -1 } \\
&=  2(N-1) \PP{ I_n + \vert n \vert \leq 0 \ :\ \vert n \vert \leq N }.
\end{aligned}
\end{align}
Hence, we have for any $N$ 
\begin{equation}
\label{eq:pleq}
\PP{ I_n + \vert n \vert \leq 0 \ :\ \vert n \vert \leq N }
\leq c \, \ell(N)^{1/2} N^{-(1-H)} .
\end{equation}

\subsection*{Lower bound for $\tilde{p}_N$}
Along the lines of the proof of \eqref{eq:ineqVartheta}, one gets an analogous estimate when replacing $N$ by $\tilde{k} := \min(k,N-k)$; namely
\begin{equation}
\label{eq:ineqVartheta2}
\E \left( {\gamma_{k}^- - \gamma_{k}^+} \right)_+ \leq \E{\vartheta_{\tilde{k}}}.
\end{equation}
Now, let $1-H < \alpha < 1$. Then, we have by the monotonicity of $\vartheta_N$ for $N^\alpha \leq k \leq N-N^\alpha$
\begin{equation}
\label{eq:ineqVartheta2.5}
\E{\vartheta_{\tilde{k}}} \leq \E{ \vartheta_{\left\lceil N^\alpha \right\rceil } }.
\end{equation}
Thus, by using \eqref{eq:ineqVartheta2} and \eqref{eq:ineqVartheta2.5}, we obtain 
\begin{align*}
\begin{aligned}
\E F_N
&=
\sum_{k=1}^{N-1} \E \left({ \gamma_k^- - \gamma_k^+ }\right)_+ \\
&\leq
\left(N-2 \left\lfloor N^\alpha \right\rfloor \right) \E{ \vartheta_{\left\lceil N^\alpha \right\rceil } }
+ 2\sum_{k=1}^{\left\lfloor N^\alpha \right\rfloor} \E{ \vartheta_{k} }. 
\end{aligned}
\end{align*}
Moreover, we know from (\ref{eq:upperBound}), that we have for all $k$ 
\begin{equation}
\label{eq:ineqVartheta3}
\E \vartheta_k \leq c \, \ell(k)^{1/2} k^{H-1}.
\end{equation}
Hence, by (\ref{eq:ineqVartheta3}) and Proposition 1.5.8 in \cite{Bingham1987}, we obtain 
\[
\sum_{k=1}^{\left\lfloor N^\alpha \right\rfloor} \E{ \vartheta_{k} } \leq c \, \ell({\left\lfloor N^\alpha \right\rfloor})^{1/2}  N^{\alpha H}.
\]
Thus, we have
\[
c^{-1} \ell(N)^{1/2} N^H \leq \E F_N \leq N \E{ \vartheta_{\left\lceil N^\alpha \right\rceil} } + c\, \ell({\left\lfloor N^\alpha \right\rfloor})^{1/2} N^{\alpha H}.
\]
Since $\alpha H < 1$, we obtain 
\[c^{-1}  \ell(N)^{1/2} N^{H-1} \leq \E{ \vartheta_{\left\lceil N^\alpha \right\rceil} }.\]
Replacing $N$ by $\left\lceil N^{1/\alpha} \right\rceil$ yields 
\[  \ell_1(N) N^{-(1-H)/\alpha} \leq \E{ \vartheta_{N} },\]
where $\ell_1$ is a slowly varying function at infinity. 

Fix $q>1$ to be chosen later and let $\|\cdot\|_q$ denote the norm $\EE{\vert X \vert^q}^{1/q}$ for some random variable $X$.
Then, using $\vartheta_N \leq \vartheta_1$ and H\"older's Inequality, we have \[\E \vartheta_N = \E \vartheta_N \mathbbm{1}_{\vartheta_N > 0}\leq \E \vartheta_1 \mathbbm{1}_{\vartheta_N > 0} \leq \| \vartheta_1 \|_q \PP{\vartheta_N > 0}^{1-1/q}.\]
Further, 
\[\| \vartheta_1 \|_q \leq \| I_{-1} - I_{1} \|_q \leq c \sqrt{q}, 
\] using that $I_{-1} - I_{1}$ is a Gaussian random variable.
So, we have
\begin{equation*}
\frac{\ell_1(N)}{c \sqrt{q}} N^{-(1-H)/\alpha} \leq \PP{\vartheta_N > 0}^{1-1/q}.
\end{equation*}
Now, setting $q:=\log(N)+1$ yields 
\begin{equation}
\label{eq:varthAlpha}
\ell_2(N)N^{-(1-H)/\alpha} \leq \PP{\vartheta_N > 0},
\end{equation}
where $\ell_2$ is a slowly varying function at infinity.

In the following, we will relate the probability $\PP{\vartheta_N>0}$ to the probability in (\ref{eq:probProof}).
This is divided into four steps.

\emph{Step 1:} %A change of measure argument.
We start with a change of measure argument.
By Corollary \ref{cor:RKHS}, we can find a function  $f \in \mathcal{H}_H(I)$ such that $f(n) \geq \frac{3}{2}\vert n \vert$ for all $n \in \Z$.
Then, using (\ref{eq:changeOfMeasure1}),
we obtain
\begin{align}
\begin{aligned}
\label{eq:com}
&e^{-\sqrt{2\|f\|^2  \log(1/\PP{\vartheta_N > 0})}-\|f\|^2/2} \PP{\vartheta_N > 0}
\\& \qquad \qquad \qquad \leq
\PP{\min_{-N \leq n \leq -1} \frac{I_n + f(n)}{n} - \max_{1 \leq n \leq N} \frac{I_n + f(n)}{n} > 0} \\
& \qquad \qquad \qquad \leq
\PP{\min_{-N \leq n \leq -1} \frac{I_n + \frac{3}{2}\vert n \vert}{n} - \max_{1 \leq n \leq N} \frac{I_n + \frac{3}{2}\vert n \vert}{n} > 0}\\
& \qquad \qquad \qquad =
\PP{\min_{-N \leq n \leq -1} \frac{I_n}{n} - \max_{1 \leq n \leq N} \frac{I_n}{n} > 3}\\
& \qquad \qquad \qquad =
\PP{ \vartheta_N > 3 }.
\end{aligned}
\end{align}
So, by (\ref{eq:varthAlpha}), $ \PP{\vartheta_N > 0}$ and $\PP{ \vartheta_N > 3 }$ differ by less than a slowly varying function at infinity.

\emph{Step 2:}
Let
\[
A_0^{(N)} := \left\{ (x_{-N},\ldots,x_{-1},x_1,\ldots,x_N) \in \R^{2N} \ :\ x_n \leq -\vert n \vert , 1 \leq \vert n \vert \leq N \right\},
\]
and let
\[
A_m^{(N)} := A_0^{(N)} + m b^{(N)}, \quad \text{where } b^{(N)}:=(-N,\ldots,-1,1,\ldots,N).
\]
In the following, we write $I \in A_m^{(N)}$ instead of $(I_{-N},\ldots,I_{-1},I_1,\ldots,I_N) \in A_m^{(N)}$ for ease of notation. We will show that $\{\vartheta_N > 3\} \subseteq \cup_{m \in \Z} \{ I \in A_m^{(N)} \}$. For this purpose, let $m^{(N)}$ be an integer-valued random variable such that  $\min_{-N \leq n \leq -1} \frac{I_n}{n}  \in [m^{(N)}+1,m^{(N)}+2)$. Then, we obviously have $I_n \leq (m^{(N)}+1)n$ for $-N \leq n \leq -1$. Furthermore, assuming $\vartheta_N > 3$, we can conclude that \[\max_{1 \leq n \leq N} \frac{I_n}{n} < \min_{-N \leq n \leq -1} \frac{I_n}{n} - 3 < m^{(N)} -1 .\] 

\emph{Step 3:}
We show that $\PP{I \in A_0^{(N)}} \geq \PP{I \in A_m^{(N)}}$. For this purpose, we make use of an argument that is commonly used to prove Anderson's Inequality.
It is well known that for any convex subsets $A,B \subseteq \R^{2N}$ and $0 < \lambda < 1$, one has \[
\mu \left( \lambda A + (1-\lambda) B \right) 
\geq 
\mu \left(  A  \right)^\lambda
\mu \left(  B  \right)^{1-\lambda},
\]
where $\mu$ is a centered Gaussian measure on $\R^{2N}$, see e.g. Theorem 2 in \cite{Prekopa1971}.
Since $(I_{-N},\ldots,I_{-1},I_1,\ldots,I_N)$ is a centered Gaussian random variable, by setting $\lambda = \frac{1}{2}$, we obtain 
\begin{align*}
\PP{I \in A_0^{(N)}} &= \PP{ I \in \frac{1}{2} A_{-m}^{(N)} + \frac{1}{2} A_{m}^{(N)} } \\ &\geq \PP{ I \in A_{-m}^{(N)}}^{1/2} \PP{I \in A_{m}^{(N)}}^{1/2}=\PP{I \in A_m^{(N)}}.
\end{align*} 
Here, we used that one has $A_0^{(N)} = \frac{1}{2} A_{-m}^{(N)} + \frac{1}{2} A_{m}^{(N)}$ and, by symmetry of the process $(I_n)$, $\PP{ I \in A_{-m}^{(N)}} = \PP{I \in A_{m}^{(N)}}$.

\emph{Step 4:}
Now, we relate the quantities $\PP{\vartheta_N > 3}$ and $\tilde{p}_N$.
Since $I_{-1}$ is a centered Gaussian random variable, we can choose a constant $c_0$ such that $\PP{I_{-1} \leq -(a_N + 1)} \in o(N^{-1})$ for $a_N = \sqrt{c_0 \log(N) }$. Further, by $\PP{\cup_{m \geq a_N} A_m} \leq \PP{I_{-1}<-(a_N + 1)}$ and symmetry of the process $(I_n)$, we get
\begin{equation*}
\PP{\cup_{\vert m \vert \geq a_N} A_m} \in o(N^{-1}).
\end{equation*}
Altogether we thus obtain
\begin{align*}
\begin{aligned}
\PP{\vartheta_N > 3}
&\leq
\PP{ \cup_{m \in \Z} A_m } \\
&\leq
\sum_{\vert m \vert < a_N} \PP{A_m} + \PP{\cup_{\vert m \vert \geq  a_N} A_m } \\
&\leq
2a_N \PP{A_0} + 2\PP{ I_{-1} \leq -(a_N+1) } \\
&=
2a_N \PP{I_n + \vert n \vert \leq 0 \ :\ \vert n \vert \leq N} + o(N^{-1}).
\end{aligned}
\end{align*}
Putting this together with \eqref{eq:varthAlpha} and \eqref{eq:com}, we get
\begin{equation}
\label{eq:pgeq}
\ell_3(N) N^{-(1-H)/\alpha} \leq 
\PP{I_n + \vert n \vert \leq 0 \ :\ \vert n \vert \leq N},
\end{equation}
where $\ell_3$ denotes a slowly varying function at infinity.

\subsection*{Polynomial rate of $p_N$}

Clearly, we have from \eqref{eq:pgeq}
\begin{align}
\label{eq:probLowerBound}
\begin{aligned}
\ell_3(N) N^{-(1-H)/\alpha}
&\leq
\PP{I_n + \vert n \vert \leq 0 \ :\ \vert n \vert \leq N} \\
&\leq
\PP{I_n \leq 0 \ :\ \vert n \vert \leq N} = p_N.
\end{aligned}
\end{align}
In particular, $p_N \geq c^{-1}N^{-1}$ for some suitable constant $c$. This estimate will be used in the following change of measure argument.
Due to Corollary \ref{cor:RKHS}, we can choose a function $f \in \mathcal{H}_H(I)$ with $f(n) \geq \vert n \vert$ for all $n \in \Z$. Then, by (\ref{eq:pleq}) 
and Proposition \ref{prop:changeOfMeasure}, we obtain
\begin{align}
\label{eq:probUpperBound}
\begin{aligned}
c \, \ell(N)^{1/2}  N^{-(1-H)}
&\geq
\PP{I_n + \vert n \vert \leq 0 \ :\ \vert n \vert \leq N} \\
&\geq
\PP{I_n + f(n) \leq 0 \ :\  \vert n \vert \leq N} \\
&\geq
\PP{I_n \leq 0 \ :\  \vert n \vert\leq N} e^{-\sqrt{2\|f\|^2 \log(1/p_N)}-\|f\|^2/2}
\\
&\geq
\PP{I_n \leq 0 \ :\  \vert n \vert \leq N} e^{-\sqrt{2\|f\|^2 \log(cN)}-\|f\|^2/2} .
\end{aligned}
\end{align}
Finally, we take $\log$ in \eqref{eq:probLowerBound}, \eqref{eq:probUpperBound}  and divide by $\log(N)$. Then, taking $\limsup_N$ and $\liminf_N$, respectively, and letting $\alpha \nearrow 1$ yields
\begin{align*}
\lim_{N \to \infty} \frac{\log\left( \PP{I_n \leq 0 \ :\ \vert n \vert \leq N} \right)}{\log(N)} = H-1.
\end{align*}

\end{document}